\newcommand{\arr}{\longrightarrow}
\newcommand{\iso}{\simeq}
\newcommand{\imply}{\rightarrow}
\newcommand{\E}{\mathcal{E}}
\newcommand{\PP}{\mathbb{P}}
\newcommand{\ee}{\mathbb{\varepsilon}}
\mathchardef\colon="303A  
\mathchardef\gt="313E  
\mathchardef\lt="313C  
\theoremstyle{definition}
\newtheorem{deff}{Definition}[section]
\newtheorem{prop}[deff]{Proposition}
\newtheorem{lem}[deff]{Lemma}
\newtheorem{cor}[deff]{Corollary}
\date{}
\begin{document}
\title{Hilbertian toposes and $\ee$-toposes}
\author{Fabio Pasquali\\\small{Paris Diderot University}}
\maketitle
\section*{Introduction}
Hilbert's epsilon calculus is an extension of Hilbert's system for classical first order predicates logic by a term-forming operator which replaces the existential quantification. Specifically, for every well formed formula $\psi(x)$ there exists a term $\ee^x_{\psi}$, in which $x$ is not free, such that$$\exists x. \psi(x)\imply\psi [\ee^x_\psi/x]$$
Since its introduction Hilbert's Epsilon calculus has had a variety of application in many fields of mathematics, logic, informatics, linguistics, etc. One of the first approaches to the Epsilon calculus in the framework of category theory is due to J.L.Bell , who introduced in \cite{Bell} an intuitionistic typed and higher order version of it using the internal logic of elementary toposes. The calculus presented by Bell has the constraint that all epsilon terms are closed. That is to say that epsilon terms exist only for formulas with at most one free variable. This restriction is motivated by the fact that the presence of epsilon terms with one (or more) free variable implies the validity of the Axiom of Choice. Hence toposes collapse to boolean ones \cite{diaco}. The calculus was called by the author \textit{partial epsilon calculus}. In \cite{Bell} the author introduces also the notion of Hilbertian topos which is the appropriate structure to express the partial epsilon calculus. In \cite{Bell} an actual example of Hilbertian topos was not provided.\\\\
This paper can be seen as a prosecution of \cite{Bell}.  We give an example of Hilbertian topos. The relevance of this example is that it underlines the constructive nature of the partial epsilon calculus. In fact  we prove that the Martin Hyland's Effective topos \cite{Eff} is Hilbertian. Moreover we study the full epsilon calculus in toposes. It turns out that the presence of all epsilon terms is a condition stronger than the Axiom of Choice, in fact we prove that the internal logic of a topos is the full epsilon calculus if and only if the topos validate the axiom of choice and every arrow of a non-initial object to the terminal object is epic.\\\\
In section \ref{sec1} we recall the internal calculus of toposes. Then we introduce in the framework of topos logic the epsilon calculus and the partial epsilon calculus. In section \ref{sec2} we recall the Bell's definition of Hilbertian topos. Large part of the section is devoted to prove that the Effective topos is Hilbertian. The characterization theorem of those toposes whose internal logic is the full epsilon calculus is in \ref{sec3}.
\section{Epsilon calculus in toposes}\label{sec1}
We work in the internal language of a given elementary topos $\E$. 
We briefly recall it below. Details can be found in standard books of topos theory.\\\\
Suppose $\E$ is an elementary topos. We denote by $\Sigma_\E$ the many typed signature generated by the objects and the arrows of $\E$. Hence the types of $\Sigma_\E$ are the objects of $\E$ and these are closed under finite product types and power types with all $\eta/\beta$-conversions. This is to say that, up to $\eta/\beta$-conversion, the product type of $A$ and $B$ is a categorical product $A\times B$, the unite type is a terminal object $1$ of $\E$ and the power type of $A$ is a categorical power object $\PP (A)$ of $A$.\\\\
A function symbol $f$ of $\Sigma_\E$ with arity $A_1,A_2,...,A_n\arr B$ is an arrow $f:A_1\times A_2\times...\times A_n\arr B$ in $\E$. A relation symbol of $\Sigma_\E$ of arity $A_1,A_2,...,A_n$ is a monic arrow of $\E$ with codomain $A_1\times A_2\times...\times A_n$.\\\\
Contexts, i.e finite lists of typed variables $$(x_1:A_1, x_2:A_2,\dots,x_n:A_n)$$ will be denoted by capital greek letters. A term is an expression of the form $$\Gamma \mid t:A$$ where $\Gamma$ is a context. We write sequents in the following form $$\Gamma\mid \phi_1,\phi_2,...\phi_n\vdash \phi_{n+1}$$ to express that all formulas in the sequent are formulas in the context $\Gamma$.\\\\
As we work in the internal logic of an elementary topos $\E$, a sequent $$\Gamma\mid \phi_1,\phi_2,...\phi_n\vdash \phi_{n+1}$$ in the context $\Gamma$ is derivable if and only if the monic arrow with codomain $\Gamma$ that represents $\phi_1\wedge\phi_2\wedge\dots\wedge\phi_n$ factors through the monic arrow that represent $\phi_{n+1}$.
\subsubsection*{The epsilon calculus}
The internal language of an elementary topos is the (intuitionistic higher order and typed) $\ee$-calculus if it validates the following two rules
\[
\AxiomC{$\Gamma, x:A \mid \psi$}
\RightLabel{$\ee$-form}
\UnaryInfC{$\Gamma \mid \ee^x_\psi:A $}
\DisplayProof\ \ \ \ \ \ 
\AxiomC{$\Gamma, x:A \mid \psi$}
\RightLabel{$\ee$-I}
\UnaryInfC{$\Gamma \mid \exists x:A.\ \psi\vdash\psi[\ee^x_\psi/x]$}
\DisplayProof
\]
provided that $A$ is not the empty type (i.e. the initial object) and the empty type does not appear among types in $\Gamma$.\\\\
The rule $\ee$-form establish that for every monic $\psi$ with codomain $\Gamma\times A$, if $A$ is not initial, then there is an arrow $\ee_\psi:\Gamma\arr A$. This motivates the constraint on $A$ to be not the empty type. For if $A$ is the empty type $0$, i.e. $0$ is an initial object of $\E$, then using $\ee$-form we get
\[
\AxiomC{$x:0 \mid \top$}
\RightLabel{$\ee$-form}
\UnaryInfC{$*:1\mid \ee^x_\top:0 $}
\DisplayProof
\]
where $\ee^x_\top$ is an arrow $1\arr 0$. If such an arrow exists, the topos is degenerate.\\\\
If the internal language of an elementary topos validates $\ee$-form and $\ee$-I, then the axiom of choice is derivable: take a formula $a:A,b:B\mid F$, then we have a term $a:A\mid\ee^b_F: B$ such that $$a:A\mid \exists b:B.\ F(a,b) \vdash F(a, \ee^b_F(a))$$
which leads to the following
$$\top\vdash \forall a:A.\ [\exists b:B.\ F(a,b) \imply F(a, \ee^b_F(a))]$$
In toposes, if the axiom of choice is valid, the law of excluded middle is derivable \cite{diaco}. This motivates the following definition.\\\\
We say that the internal language of an elementary topos is the partial $\ee$-calculus if it validates the following two rules
\[
\AxiomC{$x:A \mid \psi$}
\RightLabel{$\ee_p$-form}
\UnaryInfC{$*:1\mid\ee^x_\psi:A $}
\DisplayProof\ \ \ \ \ \ 
\AxiomC{$x:A \mid \psi$}
\RightLabel{$\ee_p$-I}
\UnaryInfC{$\exists x:A.\ \psi\vdash\psi[\ee^x_\psi/x]$}
\DisplayProof
\]
provided that $A$ is not the empty.\\\\
In the partial $\ee$-calculus, $\ee$-terms are necessarily closed. Or, equivalently, $\ee$-terms exists only for those formulas with at most one free variable. Thus all $\ee$-terms are arrows whose domain is teh terminal object $1$. Under this constraint we can not carry out the previous argument to derive the axiom of choice. In fact in the partial $\ee$-calculus the axiom of choice is not derivable, as we shall see in next section.\\\\
We call Hilbertian toposes those toposes whose internal logic is the partial $\ee$-calculus. And we call $\ee$-toposes those toposes whose internal logic is the full $\ee$-calculus.
The former class was already studied in \cite{Bell} and we dedicate the next section to produce a relevant member of it. The latter is introduced in section \ref{sec3}. 
\section{Hilbertian toposes}\label{sec2}
Hilbertian toposes were introduced by Bell in \cite{Bell} as the appropriate structure to validate rules of partial epsilon calculus.\\\\
We shall denote by $\cdot\rightarrowtail\cdot$ and  $\cdot\twoheadrightarrow\cdot$ arrows which are monic and epic respectively.\\\\
An elementary topos $\E$ is Hilbertian if every diagram of the form
\[
\xymatrix{
X\ \ar@{>->}[r]^-{p}&A\ar@{->>}[r]&1
}
\]
can be expanded to a commutative diagram of the form
\[
\xymatrix{
X\ar@{->>}[r]&U\ \ar@{>->}[r]\ar[d]&1\ar[d]^-{\ee_p}&\\
&X\ \ar@{>->}[r]_-{p}&A\ar@{->>}[r]&1
}
\]
It is clear that if $p:X\rightarrowtail A$ is a formula in the context $A$, say $a:A\mid p(a)$ then, by the general theory of topos logic \cite{Elephant}, the arrow $U\rightarrowtail 1$ corresponds to the sentence $\exists a:A\ p(a)$. By the universal property of pullback there is an arrow from $U$ to any pullback of $p$ along $\ee_p$, which is to say that the sequent $\exists a:A\ p(a) \vdash p(\ee_p)$ is valid. Since the other direction is always derivable, the condition above is equivalent to say that $U$ is isomorphic to the pullback of $p$ along $\ee_p$. This is also the proof of the following theorem.
\begin{prop} The internal logic of an Hilbertian topos is the partial epsilon calculus exactly when every object $A$ is either initial or the arrow $A\arr1$ is epic.
\end{prop}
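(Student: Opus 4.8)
The plan is to prove the biconditional by treating its two directions separately, under the standing hypothesis that $\E$ is Hilbertian. The ``only if'' direction I expect to be a one-line observation that does not even use Hilbertianness; the ``if'' direction is where the Hilbertian expansion property does the work, and it is essentially the diagram chase already sketched in the paragraph preceding the statement.

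For ``only if'', assume the internal logic of $\E$ is the partial $\ee$-calculus and fix a non-initial object $A$. I would apply $\ee_p$-form to the formula $x:A\mid\top$, which is legitimate precisely because $A$ is not the empty type, obtaining a term $*:1\mid\ee^x_\top:A$, that is, an arrow $s:1\arr A$. Composing $s$ with the canonical $A\arr 1$ gives an endomorphism of the terminal object, hence $\mathrm{id}_1$; so $s$ is a section of $A\arr 1$, and therefore $A\arr 1$ is a split epimorphism, in particular epic. Thus every object of $\E$ is either initial or has $A\arr 1$ epic.

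For ``if'', assume every non-initial object has global support and verify the two rules of the partial $\ee$-calculus directly. Given a formula $x:A\mid\psi$ with $A$ non-initial, let $p:X\rightarrowtail A$ be the mono interpreting $\psi$; since $A\twoheadrightarrow 1$ by hypothesis, the composite $X\rightarrowtail A\twoheadrightarrow 1$ has exactly the shape to which the Hilbertian property applies, so I fix one of its expansions. The expansion provides an arrow $\ee_p:1\arr A$ (the right-hand vertical), which I take as the term $\ee^x_\psi$; this validates $\ee_p$-form. For $\ee_p$-I, recall that in the expansion $U\rightarrowtail 1$ is the image of $X\to 1$ and hence interprets $\exists x:A.\ \psi$, and that the square commutes, so $U$ together with its two legs $U\rightarrowtail 1$ and $U\arr X$ is a cone over $1\xrightarrow{\ee_p}A\xleftarrow{p}X$. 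The universal property of the pullback then factors $U$ through the pullback of $p$ along $\ee_p$, and that pullback is the subobject of $1$ interpreting $\psi[\ee^x_\psi/x]$; as subobjects of $1$ this yields $\exists x:A.\ \psi\vdash\psi[\ee^x_\psi/x]$, i.e. $\ee_p$-I, the converse sequent being derivable in any topos.

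I do not expect a real obstacle, since the whole argument is definition-unwinding; the point that needs care — the one thing to double-check rather than assert — is the identification of the two subobjects of $1$ used in the ``if'' direction, namely that $U\rightarrowtail 1$ really is the interpretation of $\exists x:A.\ \psi$ and that the pullback of $p$ along $\ee_p$ really is the interpretation of the substituted formula $\psi[\ee^x_\psi/x]$. Both are instances of the standard topos-theoretic semantics of $\exists$ and of term substitution, and both are already invoked in the discussion that motivates the statement, so the write-up can simply point back to it. One may also note that validity of the partial $\ee$-calculus forces $\E$ to be Hilbertian in the first place, so in the ``only if'' direction the standing hypothesis is in fact automatic.
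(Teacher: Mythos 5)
Your proof is correct and follows essentially the same route as the paper, whose own argument is just the pullback discussion in the paragraph preceding the statement; your ``if'' direction reproduces that discussion and your ``only if'' direction supplies the (easy, implicit in the paper) observation that $\ee_p$-form applied to $x:A\mid\top$ yields a section of $A\arr 1$. No gaps.
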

As remarked in \cite{Bell}, as soon as we admit $\ee$-terms with even one free variable, the axiom of choice is derivable. This is not the case for Hilbertian toposes. To underline the constructive nature of Hilbertian toposes we show that Martin Hyland's Effective topos is Hilbertian.\\\\We recall below the definition of the effective topos. This topos can be seen as the result of the tripos to topos construction applied to the realizability tripos \cite{tripinret}. As triposes do not play any role here, we give a direct presentation of the effective topos and we address the reader to \cite{Eff} for details.
\subsubsection*{The effective topos}
If $n$ and $p$ are natural numbers, we write $n.p$ to denote the application to $p$ of the partial recursive function coded by $n$, and we call $n.p$ the Kleene application of $n$ to $p$.\\\\Suppose $X$ is a set and $\phi, \psi: X\arr \PP(\mathbb{N})$ are function from $X$ to the powerset of the set of natural numbers. We say that $\phi\le \psi$ if there exists a number $n$ such that for every $x\in X$ and every $a\in \phi(x)$ the Kleene application $n.a$ is defined and $n.a\in \psi(x)$. The relation $\le$ is a preorder on $\PP(\mathbb{N})^X$ and we call it Kleene realizability order. Moreover we say that $n$ is a track of $\phi\le\psi$. The poset reflection of $\PP(\mathbb{N})^X$ is a Heyting algebra \cite{tripinret}. Binary meets are denoted by $\wedge$ and a top element by $\top_X$. Pseudo relative complements by $\imply$. The top element $\top_X$ is represented by the function on $X$ to $\PP(\mathbb{N})$ which whose values are  constantly equal to $\mathbb{N}$.\\\\
Denote by $1$ a chosen singleton set.
Members of $\PP(\mathbb{N})^1\iso \PP(\mathbb{N})$ are called sentences. We say that a sentence $\phi$ is valid when $\top_1\le\phi$.\\\\
For every set $X$, and every $\rho\in \PP(\mathbb{N})^{X\times X}$ we say that $\rho$ is a partial equivalence relation over $X$ if both the sentences $$\bigcap_{x\in X}\bigcap_{y\in X} \rho(x,y)\imply \rho (y,x)$$
$$\bigcap_{x\in X}\bigcap_{y\in X} \bigcap_{z\in X} \rho(x,y) \wedge \rho(y,z) \imply \rho (x,z)$$are valid. If we read the intersection as a universal quantification, the two sentences above express the fact that $\rho$ is symmetric and transitive.\\\\
Suppose $A$ and $B$ are sets and $\rho$ and $\sigma$ are partial equivalence relations over $A$ and $B$ respectively. $F\in\PP(\mathbb{N})^{A\times B}$ is said to be a functional relation from the pair $(A,\rho)$ to the pair $(B,\sigma)$ if all the following sentences are valid
$$\bigcap_{a\in A}\bigcap_{b\in B}F(a,b)\imply \rho(a,a)\wedge \sigma(b,b)$$
$$\bigcap_{a\in A}\bigcap_{a'\in A}\bigcap_{b\in B}\bigcap_{b'\in B}F(a,b)\wedge\rho(a,a')\wedge \sigma(b,b')\imply F(a',b')$$
$$\bigcap_{a\in A}\bigcap_{b\in B}\bigcap_{b'\in B} F(a,b)\wedge F(a,b')\imply \sigma(b,b')$$
$$\bigcap_{a\in A}[\rho(a,a)\imply \bigcup_{b\in B} F(a,b)]$$
The first and second sentences express the fact that $F$ "believes" that $\rho$ and $\sigma$ are equality predicates over $A$ and $B$ respectively. Then in particular in the first sentence $\rho(a,a)\wedge \sigma(b,b)$ plays the role of a top element. The third and fourth sentences express (if we read the union as an existential quantification) the fact that $F$ behaves like a function from $A$ to $B$, i.e. it is singled-valued and total.\\\\
The effective topos $\E ff$ is the category whose objects are pairs $(A,\rho)$ where $A$ is a set and $\rho$ is a partial equivalence relation over $A$ and an arrow $[F]:(A,\rho)\arr (B,\sigma)$ is an equivalence class of functional relations where $G \in [F]$ if and only if $G= F$ as members of the poset reflection of $\PP(\mathbb{N})^{A\times B}$.\\\\
For the rest of the section we freely confuse a morphism of $\E ff$ with any of its representatives.\\\\
Suppose $(A,\rho)$ is an object of $\E ff$. A member $P$ of $\PP(\mathbb{N})^{A}$ is said to be a strict and relational proposition over $(A,\rho)$ if the following sentences are valid
$$\bigcap_{x\in A}P(x)\imply\rho(x,x)$$
$$\bigcap_{x\in A}\bigcap_{y\in A}\rho(x,y)\wedge P(x)\imply P(y)$$
As we wrote before for $F$, these two sentences express the fact that $P$ "believes" that $\rho$ is an equality predicate. We say that a monomorphism $M$ of $\E ff$, with codomain $(A,\rho)$, is represented by $P$ if $M$ is isomorphic to a monic $\rho_P:(A,\rho_P)\arr(A,\rho)$ where $$\rho_P(x,y) = \rho(x,y)\wedge P(x)$$
The following properties holds in $\E ff$, proofs are in \cite{Eff, tripinret}.
\begin{itemize}
\item[1)] Every monomorphism $M:(A,\rho)\arr(B,\sigma)$ is isomorphic to one of the form $\sigma_P:(B,\sigma_P)\arr(B,\sigma)$ for some strict and relational proposition $P$ over $(B,\sigma)$.
\item[2)] A terminal object of $\E ff$ is $(1, \top_{1\times 1})$. And the unique arrow $1_\rho:(A,\rho)\arr (1,\top_{1\times 1})$ is the functional relation in $\PP(\mathbb{N})^{A\times 1}\iso \PP(\mathbb{N})^{A}$ determined by the following assignment $x\mapsto \rho(x,x)$.
\item[3)] Given a diagram of the form $(A,\rho)\twoheadrightarrow  (U,\xi)\rightarrowtail(1,\top_{1\times 1})$, the monomorphism $(U,\xi)\rightarrowtail(1,\top_{1\times 1})$ is represented by $$ \bigcup_{x\in A} \rho(x,x)$$
which is a strict and relational proposition over $(1,\top_{1\times 1})$.
\item[4)] For every pair of objects of $\E ff$, say $(A,\rho)$ and $(B,\sigma)$, if $f:A\arr B$ is a function such that the sentence $$\bigcap_{x\in A}\bigcap_{y\in A} \rho(x,y)\imply \sigma(f(x),f(y))$$ is valid, then the assignment $$(x,b)\mapsto \sigma(f(x),b)\wedge \rho(x,x)$$ determines an element of $\PP(\mathbb{N})^{A\times B}$, which is a functional relation from $(A,\rho)$ to $(B,\sigma)$. We denote it by $\Gamma(f)$ and we call it the graph of $f$. Thus $\Gamma(f)$ is (the representative of) a morphism of $\E ff$ from $(A,\rho)$ to $(B,\sigma)$.
\item[5)] A pullback of a monic $\sigma_P: (B,\sigma_P)\arr (B,\sigma)$ along $F:(A,\rho)\arr (B,\sigma)$ is the monic represented by $Q$ in $\PP(\mathbb{N})^A$, where $$Q(a) = \bigcup_{b\in B}F(a,b) \wedge P(b)$$
\item[6)] An arrow $E:(A,\rho)\arr (B,\sigma)$ is epic if and only if the following sentence is valid$$\bigcap_{b\in B}[\sigma(b,b)\imply\bigcup_{a\in A}E(a,b)]$$
\end{itemize}
Previous points 1) to 5) allow to prove the following lemma.
\begin{lem} Under the notation above, if $F$ is of the form $\Gamma(f)$ for some function $f:A\arr B$, then $Q = P\circ f \wedge \rho$ in the poset reflection of $\PP(\mathbb{N})^A$.
\end{lem}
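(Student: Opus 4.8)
The plan is to unfold both sides and verify the two inequalities of the preorder $\le$ separately, each by exhibiting a uniform track. First I would substitute the hypothesis $F=\Gamma(f)$ into the description of $Q$ coming from point 5). Since $\Gamma(f)(a,b)=\sigma(f(a),b)\wedge\rho(a,a)$, this gives
$$Q(a)\;=\;\bigcup_{b\in B}\bigl(\sigma(f(a),b)\wedge\rho(a,a)\wedge P(b)\bigr),$$
and the claim becomes $Q(a)=P(f(a))\wedge\rho(a,a)$ in the poset reflection of $\PP(\mathbb{N})^A$, where by $P\circ f\wedge\rho$ we mean the element $a\mapsto P(f(a))\wedge\rho(a,a)$.

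For the inequality $Q\le P\circ f\wedge\rho$: a realizer of $Q(a)$ codes (the index of) some $b\in B$ together with realizers of $\sigma(f(a),b)$, of $\rho(a,a)$ and of $P(b)$. From the realizer of $\sigma(f(a),b)$ and the fixed track witnessing symmetry of $\sigma$ we obtain a realizer of $\sigma(b,f(a))$; feeding this and the realizer of $P(b)$ to the fixed track of the relational axiom $\bigcap_{x}\bigcap_{y}\sigma(x,y)\wedge P(x)\imply P(y)$ for the strict and relational proposition $P$ yields a realizer of $P(f(a))$. Pairing it with the already available realizer of $\rho(a,a)$ gives a realizer of $P(f(a))\wedge\rho(a,a)$. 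All operations used are compositions of fixed partial recursive functions, so the resulting track does not depend on $a$ (nor on the decoded $b$).

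For the reverse inequality $P\circ f\wedge\rho\le Q$: given realizers of $P(f(a))$ and of $\rho(a,a)$, apply the fixed track of the strictness axiom $\bigcap_{x}P(x)\imply\sigma(x,x)$ to the first one to get a realizer of $\sigma(f(a),f(a))$; packaging this realizer, the realizer of $\rho(a,a)$ and the realizer of $P(f(a))$, and tagging the package with the index $b=f(a)$, produces a realizer of $\sigma(f(a),f(a))\wedge\rho(a,a)\wedge P(f(a))$, hence of the union defining $Q(a)$. Again the track is uniform in $a$. Combining the two inequalities gives $Q=P\circ f\wedge\rho$ in the poset reflection.

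\textbf{Main obstacle.} There is no deep difficulty here; the only point requiring care is the \emph{uniformity} of the realizers — one must check that each step is realized by a single number independent of $a\in A$, which is automatic since the validity of the strictness, relational and symmetry sentences for $P$ and $\sigma$ supplies such fixed tracks and Kleene application composes them into a fixed partial recursive function. One should also not forget that the $\rho(a,a)$ summand on the right-hand side is genuinely needed: $P\circ f$ alone yields only $\sigma(f(a),f(a))$, not $\rho(a,a)$, and it is recovered in the forward direction precisely from the $\rho(x,x)$ component built into $\Gamma(f)$.
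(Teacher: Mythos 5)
Your proof is correct and follows essentially the same route as the paper: the paper performs the same computation as a chain of equalities in the poset reflection (replacing $P(b)$ by $P(f(a))$ under $\sigma(f(a),b)$ via relationality of $P$, then collapsing the union using totality of $\Gamma(f)$), and your two uniform tracks are exactly the realizer-level justification of those equational steps. The only cosmetic difference is that you verify the two inequalities explicitly with tracks, and in the reverse direction you obtain $\sigma(f(a),f(a))$ from strictness of $P$ rather than from totality of $\Gamma(f)$; both are valid.
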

\begin{proof} Consider the following equalities in the poset reflection of $\PP(\mathbb{N})^A$
\begin{equation}\notag
\begin{split}
Q(a) &= \bigcup_{b\in B}\Gamma(f)(a,b) \wedge P(b)\\
& = \bigcup_{b\in B}\sigma(f(a),b)\wedge \rho(a,a) \wedge P(b)\\
& = \bigcup_{b\in B}\sigma(f(a),b)\wedge \rho(a,a) \wedge P(f(a))\\
&= P(f(a)) \wedge \bigcup_{b\in B}\Gamma(f)(a,b)\\
&= P(f(a)) \wedge \rho(a,a)
\end{split}
\end{equation}
where in passing from second to third line we used the fact that $P$ is relational and in the last line we use the fact that $\Gamma(f)$ is functional (and hence total).
\end{proof}
\begin{cor}\label{coro} If $(A,\rho)$ is $(1,\top_{1\times 1})$ and if $F$ is of the form $\Gamma(f)$ for some function $f:1\arr B$, then $Q = P\circ f$ in the poset reflection of $\PP(\mathbb{N})^A$.
\end{cor}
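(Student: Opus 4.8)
The plan is to obtain this as an immediate specialization of the preceding lemma, the only extra ingredient being that the partial equivalence relation carried by the terminal object is a top element. First I would apply the lemma with $(A,\rho) = (1,\top_{1\times 1})$ and with $F = \Gamma(f)$ for the given $f:1\arr B$; this yields directly
\[
Q = P\circ f \wedge \rho
\]
in the poset reflection of $\PP(\mathbb{N})^{1}$, where as before $\rho$ is here used for the associated element $a\mapsto \rho(a,a)$ of $\PP(\mathbb{N})^{1}$.

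Next I would unwind what $\rho = \top_{1\times 1}$ means. Under the identifications $\PP(\mathbb{N})^{1\times 1}\iso\PP(\mathbb{N})^{1}\iso\PP(\mathbb{N})$, the element $\top_{1\times 1}$ is, by the way the top element $\top_X$ was introduced, the function constantly equal to $\mathbb{N}$; hence the element $a\mapsto \rho(a,a)$ of $\PP(\mathbb{N})^{1}$ is exactly the top element $\top_{1}$ of the Heyting algebra obtained as the poset reflection of $\PP(\mathbb{N})^{1}$. Since meeting with the top element is the identity in any Heyting algebra, $P\circ f \wedge \top_{1} = P\circ f$, and therefore $Q = P\circ f$, as asserted.

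I do not expect any real obstacle here: the statement is a one-line consequence of the lemma. The only point that needs care is keeping the identification $\PP(\mathbb{N})^{1\times 1}\iso\PP(\mathbb{N})^{1}$ explicit, so that $\rho(x,x)$ genuinely names the top of the relevant Heyting algebra, together with the already-recorded fact that the top element is represented by the constantly-$\mathbb{N}$ function; once this is observed the conclusion is automatic.
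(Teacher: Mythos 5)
Your argument is correct and is exactly the intended one: the paper's own proof is simply the word ``Straightforward,'' and what it leaves implicit is precisely what you spell out, namely that specializing the lemma to $(1,\top_{1\times 1})$ makes $\rho(x,x)$ the top element $\mathbb{N}$ of the poset reflection of $\PP(\mathbb{N})^1$, so the meet with it disappears. No gaps; your write-up just makes the paper's elided step explicit.
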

\begin{proof} Straightforward.
\end{proof}
We can now prove the main proposition of the section.
\begin{prop}
$\E ff$ is Hilbertian.
\end{prop}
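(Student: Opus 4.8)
The plan is to reduce the diagram‑completion problem to a single pullback computation, and then to build $\ee_p$ as the graph of one well‑chosen point of the codomain, using Corollary~\ref{coro} to read off the pullback.

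First I would put the data in normal form. Given $p:X\rightarrowtail A\twoheadrightarrow 1$ with $A\twoheadrightarrow 1$ epic, property 1) lets me assume the codomain of $p$ is an object $(A,\rho)$, the object $X$ is $(A,\rho_P)$ for a strict and relational proposition $P$ over $(A,\rho)$ with $\rho_P(x,y)=\rho(x,y)\wedge P(x)$, and $p$ is the canonical monic $\rho_P$. Let $X\twoheadrightarrow U\rightarrowtail 1$ be the image factorization of $X\arr 1$; by property 3) the monic $U\rightarrowtail 1$ is represented by $D:=\bigcup_{x\in A}\rho_P(x,x)=\bigcup_{x\in A}\rho(x,x)\wedge P(x)$, i.e.\ by the sentence $\exists a:A\ p(a)$. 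As observed above, the Hilbertian condition now reduces to exhibiting an arrow $\ee_p:1\arr A$ whose pullback along $p$ is isomorphic, as a subobject of $1$, to $U$: the universal property of that pullback then supplies the vertical arrow $U\arr X$ together with the commutativity of the square, and $X\twoheadrightarrow U\rightarrowtail 1$ is already an image factorization. Also, by property 2) the epi $A\twoheadrightarrow 1$ is $1_\rho:x\mapsto\rho(x,x)$, so by property 6) its epicness says exactly that $\bigcup_{x\in A}\rho(x,x)\neq\emptyset$; in particular $A$ is inhabited.

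Next I would construct $\ee_p$ by choosing a point $a^\ast\in A$ and letting $f:1\arr A$ be the constant function with value $a^\ast$. Since the domain $1$ is a singleton, the hypothesis of property 4) for $f$ collapses to validity of $\mathbb{N}\imply\rho(a^\ast,a^\ast)$, i.e.\ to $\rho(a^\ast,a^\ast)\neq\emptyset$; granting that, $\ee_p:=\Gamma(f)$ is an arrow $(1,\top_{1\times 1})\arr(A,\rho)$, and by Corollary~\ref{coro} the pullback of $p=\rho_P$ along $\ee_p$ is the subobject of $1$ represented by $Q$ with $Q(\ast)=P(a^\ast)$. Hence the whole proof comes down to choosing $a^\ast$ so that (i) $\rho(a^\ast,a^\ast)\neq\emptyset$ and (ii) $P(a^\ast)$ and $D$ represent the same subobject of $1$. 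I would make the choice by cases on whether $D$ is inhabited: if $D\neq\emptyset$, a realizer of $D$ lies in $\rho(x,x)\wedge P(x)$ for some $x\in A$, so $\rho(x,x)\neq\emptyset$ and $P(x)\neq\emptyset$ and I set $a^\ast:=x$; if $D=\emptyset$, strictness of $P$ provides a single track for $\bigcap_{x\in A}P(x)\imply\rho(x,x)$, so $P(x)\neq\emptyset$ would force $\rho(x,x)\neq\emptyset$ and hence $\rho(x,x)\wedge P(x)\neq\emptyset$, contradicting $D=\emptyset$; thus $P(x)=\emptyset$ for all $x$, and I take $a^\ast$ to be any point with $\rho(a^\ast,a^\ast)\neq\emptyset$, which exists because $\bigcup_{x\in A}\rho(x,x)\neq\emptyset$. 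In both cases (i) holds and $P(a^\ast)\neq\emptyset$ if and only if $D\neq\emptyset$. To get (ii) I would use that over the terminal object the Kleene preorder is coarse — for $S,T\subseteq\mathbb{N}$, $S\le T$ holds as soon as $T\neq\emptyset$ or $S=\emptyset$ — so two strict and relational propositions over $(1,\top_{1\times 1})$ represent the same subobject precisely when they are simultaneously empty or nonempty. Applied to $Q(\ast)=P(a^\ast)$ and $D$, this is exactly what the choice of $a^\ast$ gives, so the pullback of $p$ along $\ee_p$ is isomorphic to $U$ and the diagram closes.

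The part that needs care — the main obstacle — is the choice of $a^\ast$ above, and in particular the realization that $\ee_p$ must be assembled from a point of $A$ picked in the meta‑theory (there is no uniform, recursive way to read such a point off a realizer of $D$), which is harmless only because $\E ff$ is two‑valued, so $P(a^\ast)$ need merely share the emptiness status of $D$ rather than be Kleene‑equivalent to it. It is also worth noting where the epicness of $A\twoheadrightarrow 1$ is used: solely in the case $D=\emptyset$, to guarantee that $A$ has a point at all, so that an arrow $\ee_p:1\arr A$ can exist. Everything else is routine manipulation of properties 1)–6) together with Corollary~\ref{coro}.
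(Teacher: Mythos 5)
Your proof is correct and follows essentially the same route as the paper's: reduce via property 3) and Corollary~\ref{coro} to finding a point $a^\ast\in A$ with $\rho(a^\ast,a^\ast)\neq\emptyset$ and $P(a^\ast)$ equivalent to $\bigcup_{a\in A}P(a)$ as a sentence, then choose $a^\ast$ in the meta-theory by cases on whether that union is inhabited, using epicness of $A\twoheadrightarrow 1$ only in the empty case. Your explicit observation that the realizability preorder on sentences is coarse (two sentences are equivalent iff simultaneously empty or nonempty) is left implicit in the paper but is exactly the fact being used.
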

\begin{proof}
Consider the diagram
\[
\xymatrix{
(A,\rho_P)\ \ar@{>->}[r]^-{\rho_P}&(A,\rho)\ar@{->>}[r]^-{!_\rho}&1
}
\]
It suffices to show that there exists an arrow $\ee_{\rho_P}:(1,\top_{1\times 1})\arr (A,\rho)$ such that the pullback of $\rho_P$ along $\ee_{\rho_P}$ is isomorphic to the monic arrow in any epi-mono factorization of $!_\rho\circ\rho_P$. After point 3) above this is represented by $$\bigcup_{a\in A}\rho_P(a,a)=\bigcup_{a\in A}\rho(a,a)\wedge P(a)=\bigcup_{a\in A}P(a)$$
Since $!_\rho$ is epic we have after point 6) that $$\top_1\imply\bigcup_{a\in A}\rho(a,a)$$is a valid sentence. Which is to say that in $\PP(\mathbb{N})^1\iso \PP(\mathbb{N})$ we have $$\mathbb{N}\le \bigcup_{a\in A}\rho(a,a)$$ In other words the union on the right is not empty, i.e. there is $\overline{a}$ with $ \rho(\overline{a},\overline{a})\not=\emptyset$.\\\\
Any such $\overline{a}\in A$ determines an obvious function $\overline{a}: 1\arr A$ with the property that $$\mathbb{N}\le \rho(\overline{a},\overline{a})$$ in fact the inequality is tracked by the code of any constant function $n\mapsto x$ where $x$ is any chosen element of $\rho(\overline{a},\overline{a})$. Therefore the graph $$\Gamma(\overline{a}):(1,\top_{1\times 1})\arr (A,\rho)$$ is an arrow of $\E ff$.\\\\
After \ref{coro} we have that the pullback of $\rho_P$ along $\Gamma(\overline{a})$ is represented by $P (\overline{a})$. Then to prove that $\E ff$ is Hilbertian it suffices to show that there is $\ee:1\arr (A,\rho)$ such that $$P(\ee)=\bigcup_{a\in A}P(a)$$
If the union is empty then $P(a)=\emptyset$ for every $a\in A$, thus any choice of $\ee$ with $ \rho(\ee,\ee)\not=\emptyset$ determines the desired $\ee$-term. We proved above that at least one such arrow exists.\\\\
If the union is not empty, then there is $q\in A$ with $P(q)\not=\emptyset$. Since $P$ is strict we have that $P(q)\le \rho(q,q)$, then $ \rho(q,q)$ is not empty and $\Gamma(q)$ is the desired epsilon term.\end{proof}
\section{$\ee$-Toposes}\label{sec3}
If, on one side, Hilbertian toposes are the appropriate structure to interpret the partial epsilon calculus, $\ee$-toposes, which we introduce below, are the appropriate structure to express the full epsilon calculus. Indeed we show a little more: $\ee$-toposes are the exact structure to express the full epsilon calculus (see proposition \ref{car}). Of course, as the axiom of choice is derivable, $\ee$-toposes validate the law of excluded middle \cite{diaco}. Recall that a topos validates the axiom of choice (AC) if every epimorphism has a section.
\begin{deff}An elementary topos $\E$ is an $\ee$-topos if it validates AC and if every arrow $A\arr 1$ is epic, provided that $A$ is not initial.
\end{deff}
There is a trivial example of $\ee$-topos which is \textbf{Sets}, the topos of sets and functions. 
\begin{prop}\label{car} Let $\E$ be an elementary topos. The following are equivalent
\begin{itemize}
\item[i)] The internal logic of $\E$ is the full epsilon calculus
\item[ii)] $\E$ is an $\ee$-topos
\end{itemize}
\end{prop}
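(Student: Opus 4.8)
The plan is to prove the two implications separately, with the hard direction being (i)$\Rightarrow$(ii).

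For (ii)$\Rightarrow$(i), I would assume $\E$ validates AC and that every non-initial object admits an epic arrow to $1$, and verify the two rules $\ee$-form and $\ee$-I directly. Given a formula $\Gamma, x:A \mid \psi$ with $A$ not the empty type and no empty type occurring in $\Gamma$, I first note that $\Gamma \arr 1$ is epic (by the hypothesis on non-initial objects, since the types in $\Gamma$ are non-initial and products of non-initial objects are non-initial — this uses that in a topos a finite product of objects each admitting an epi to $1$ still admits an epi to $1$; alternatively one works over the slice $\E/\Gamma$, which is again an $\ee$-topos, and reduces to the closed case). Interpreting $\psi$ as a subobject $S \rightarrowtail \Gamma \times A$, I want an arrow $\ee_\psi : \Gamma \arr A$. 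Consider the subobject $\exists_A S \rightarrowtail \Gamma$ (the image of $S$ under the projection). Since $A$ is non-initial, $A \arr 1$ is epic, hence $\Gamma \times A \arr \Gamma$ is epic, and by AC it has a section $s : \Gamma \arr \Gamma \times A$; composing with the second projection gives a "default" map $d : \Gamma \arr A$. On the subobject $\exists_A S$, the epi $S \twoheadrightarrow \exists_A S$ has (by AC) a section, which composed with $S \rightarrowtail \Gamma\times A \arr A$ gives a map $\exists_A S \arr A$ that lands inside the fibre of $\psi$; glueing this with $d$ along the complemented subobject $\exists_A S$ (complemented because the topos is Boolean, AC implying excluded middle \cite{diaco}) produces $\ee_\psi : \Gamma \arr A$. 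By construction, on $\exists_A S$ the pair $\langle \mathrm{id}, \ee_\psi\rangle$ factors through $S$, which is exactly the sequent $\exists x:A.\ \psi \vdash \psi[\ee_\psi/x]$, i.e. $\ee$-I.

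For (i)$\Rightarrow$(ii), assume the internal logic is the full $\ee$-calculus. That AC is derivable is already shown in the excerpt (the displayed derivation in Section \ref{sec1} from $\ee$-form and $\ee$-I), and AC implies excluded middle, so $\E$ is Boolean. It remains to show every non-initial $A$ has $A \arr 1$ epic. Suppose $A$ is not initial. Working in the slice $\E/1 = \E$, apply $\ee$-form to the formula $a:A \mid \top$ (here $\Gamma$ is empty, $A$ non-initial): we get a term $\ast : 1 \mid \ee^a_\top : A$, i.e. an arrow $e : 1 \arr A$. Then $\ee$-I gives $\exists a:A.\ \top \vdash \top[\ee^a_\top/a]$, which is vacuous; so I instead need to extract epic-ness more carefully. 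The key observation: the image of $A \arr 1$ is a subobject $U \rightarrowtail 1$; since $A$ is non-initial, $U$ is non-initial, and $U \arr 1$ being monic, Booleanness makes $U$ complemented, $1 \iso U \sqcup V$. If $U \neq 1$ then $V$ is non-initial, so by the same construction there is an arrow $1 \arr V$, but also $1 \iso U \sqcup V$ yields $1 \arr U$; pulling back, one finds $A$ also maps to... — here I must argue that the existence of $\ee$-terms forces $U = 1$. The cleanest route: apply $\ee$-form over the slice $\E/V$ to a non-initial object pulled back from $A$, deriving that $V$ is both inhabited (admits $1 \arr V$ after pullback, contradicting $V$ disjoint from the image) — more precisely, $A \times V$ over $V$ is the pullback of $A \arr 1$ along $V \rightarrowtail 1$, and since $V$ and $U$ are disjoint, $A \times V$ is a subobject of $A$ lying over $V$, yet the image of $A\arr 1$ misses $V$, forcing $A \times V$ initial; but then over the slice $\E/V$, which satisfies the full $\ee$-calculus (slices of $\ee$-toposes are $\ee$-toposes), we have a non-initial-type requirement that cannot be met unless $V$ is initial. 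Thus $U = 1$ and $A \arr 1$ is epic.

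The main obstacle is the last step of (i)$\Rightarrow$(ii): squeezing epic-ness of $A \arr 1$ out of the mere availability of $\ee$-terms, since $\ee$-I gives no information for the trivially-true formula $\top$. The resolution is to feed the $\ee$-rules a formula that does see the image of $A \arr 1$ — e.g. apply $\ee$-form and $\ee$-I in the slice over the would-be-nonempty complement $V$ of the image, where the pulled-back copy of $A$ becomes initial, and derive a contradiction with the side condition that $\ee$-terms are only produced for non-initial types (or, dually, produce an arrow $1 \arr V$ that cannot exist because $V$ is disjoint from the image of $A$). I expect a short lemma to the effect that $\E/B$ inherits the full $\ee$-calculus for any $B$, which is routine since the $\ee$-rules are stable under the slice-reindexing of subobjects.
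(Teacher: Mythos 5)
Your direction (ii)$\Rightarrow$(i) is essentially the paper's own argument: factor $\pi_\Gamma\phi$ through its image, split the resulting epi by AC, complement the image subobject of $\Gamma$ using Booleanness (AC $\Rightarrow$ LEM), and glue with a ``default'' value of $A$ on the complement; the only cosmetic difference is that you obtain the default map by splitting $\Gamma\times A\twoheadrightarrow\Gamma$ rather than by using a global point of $\Gamma\times A$ as the paper does. That half is fine.

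The gap is in (i)$\Rightarrow$(ii), and it is a missed triviality rather than a missing hard idea. You correctly apply $\ee$-form to $x:A\mid\top$ and obtain an arrow $e=\ee^x_\top:1\arr A$, but then declare that $\ee$-I ``is vacuous'' and that epic-ness must be extracted ``more carefully.'' It need not: since $1$ is terminal, $!_A\circ e:1\arr 1$ is forced to be $id_1$, so $e$ is a section of $!_A$ and $!_A$ is split epic, hence epic. This is exactly the paper's one-line argument (it phrases the formula as $\top_{1\times A}$ represented by $id_{1\times A}$, but the content is the same: any global element of $A$ splits $A\arr 1$). The detour you substitute for this observation does not close as written. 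Writing $1\iso U\sqcup V$ with $U=\mathrm{Im}(!_A)$, you need to rule out $V$ non-initial; but the fact that $A\times V$ becomes initial in $\E/V$ is not by itself a contradiction --- the side condition on $\ee$-form only forbids \emph{applying} the rule to initial types, it does not forbid a non-initial object from becoming initial after pullback --- and the ``arrow $1\arr V$ that cannot exist'' is never actually produced from the rules. (The detour can be repaired, e.g.\ by noting that $!_A\circ e=id_1$ factors through $U\rightarrowtail 1$, making that mono split epi and hence iso, so $U=1$; but at that point you have simply rediscovered that $e$ is a section.) I would also flag, as a minor point in (ii)$\Rightarrow$(i), that your parenthetical claim that products of non-initial objects are non-initial is false in a general topos (in $\mathbf{Sets}\times\mathbf{Sets}$, $(1,\emptyset)\times(\emptyset,1)$ is initial); the correct statement, which you also give, is that products of well-supported objects are well-supported, and in an $\ee$-topos the two notions coincide.
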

Before proving the statement, we remark that the internal logic of Hilbertian toposes is a fragment of the full epsilon calculus, then an immediate consequence of \ref{car} is that every $\ee$-topos is Hilbertian.
\begin{proof} i$\imply$ ii) We already remarked that in the full epsilon calculus AC is derivable. Now consider a non-initial object $A$. We have a proposition $\top_{1\times A}$ which is represented by the identity arrow $id_{1\times A}$. Then there exists an epsilon term $$\ee_{\top_{1\times A}}:1\arr A$$
which is necessarily a section of the unique arrow $A\arr 1$. An arrow with a section is epic.\\\\
ii$\imply$i) Suppose $\phi :X\rightarrowtail \Gamma\times A$ is monic. Then we have a diagram
\[
\xymatrix{
X\ar[d]_-{\phi}\ar@{->>}[r]^-{q}&\text{Im}(\pi_\Gamma\phi)\ar[d]^-{m}\\
\Gamma\times A\ar[r]_-{\pi_\Gamma}&\Gamma
}
\]
where the image of $\pi_\Gamma\phi$ is the monomorphism $m:\text{Im}(\pi_\Gamma\phi)\rightarrowtail \Gamma$ which represents the formula $\Gamma\mid\exists a:A\ \phi(a)$.\\\\
By AC the epimorphism $q$ has a section $s$. Moreover every monic has a complement, then $\Gamma$ is the coproduct $$\Gamma \iso \text{Im}(\pi_\Gamma\phi) + Y$$ for some object $Y$, where $m$ is one of the two canonical injections. Denote by $i_Y$ the other canonical injection and consider the diagram
\[
\xymatrix{
X\ar[r]^-{\phi}&\Gamma\times A&1\ar[l]_-{a}\\
\text{Im}(\pi_\Gamma\phi)\ar[r]_-{m}\ar[u]^-{s}&\Gamma\ar[u]^-{[\phi s, a!_Y]}&Y\ar[l]^-{i_Y}\ar[u]_-{!_Y}
}
\]
where $a$ is a section of the unique arrow $\Gamma\times A\arr 1$, which exists by AC since we assumed that those arrows are epic.
The arrow $\pi_A[\phi s, a!_Y]$ is $\ee_\phi$.\\\\
It remains to prove that the square
\[
\xymatrix{
\text{Im}(\pi_\Gamma\phi)\ar[r]^-{s}\ar[d]_-{m}&X\ar[d]^-{\phi}\\
\Gamma\ar[r]_-{<id_\Gamma,\ee_\phi>}&\Gamma\times A
}
\]
which is commutative, is also a pullback.\\\\Consider the arrows $x:Q\arr X$ and $y:Q\arr \Gamma$, with  $<id_\Gamma,\ee_\phi>y=\phi x$. The arrow $qx: Q\arr \text{Im}(\pi_\Gamma\phi)$ is the desired arrow, in fact
$$\phi sqx =<id_\Gamma,\ee_\phi>mqx=<id_\Gamma,\ee_\phi>\pi_\Gamma\phi x=<id_\Gamma,\ee_\phi>\pi_\Gamma<id_\Gamma,\ee_\phi>y=<id_\Gamma,\ee_\phi>y=\phi x$$
then $x=sqx $ since $\phi$ is monic. Analogously, using the fact that $<id_\Gamma, \ee_\phi>$ is monic, since it is a section, we have $mqx=y$
\end{proof}
One might be led to think that the condition that arrows $A\arr 1$ are epic is redundant. The following example shows that in a topos this does not follow from AC: consider the topos $\textbf{Sets}\times \textbf{Sets}$. It validates AC, but the arrow $(1,\emptyset)\arr(1,1)$ is clearly not epic. In other words, the presence of all $\ee$-terms is sufficient to prove AC, but not necessary.
\section{Conclusions}
If, on one side, the internal logic of Hilbertian toposes is intuitionistic, on the other side the internal logic of $\ee$-toposes is necessarily classic. Toposes can be seen as Heyting categories with the power objects axiom. In Heyting categories AC does not imply LEM and therefore they might constitute an appropriate setting to interpret the full intuitionistic first order epsilon calculus. Nevertheless the study of the epsilon calculus inside a Heyting category is left to future developments.


\begin{thebibliography}{1}

\bibitem{Bell}
John~L. Bell.
\newblock Hilbert's {$\epsilon$}-operator in intuitionistic type theories.
\newblock {\em Math. Logic Quart.}, 39(3):323--337, 1993.

\bibitem{diaco}
R.~Diaconescu.
\newblock Axiom of choice and complementation.
\newblock {\em Proceedings of the American Mathematical Society}, 51(1):176 --
  178, 1975.

\bibitem{Eff}
J.~M.~E. Hyland.
\newblock The effective topos.
\newblock In {\em The {L}.{E}.{J}. {B}rouwer {C}entenary {S}ymposium
  ({N}oordwijkerhout, 1981)}, volume 110 of {\em Stud. Logic Foundations
  Math.}, pages 165--216. North-Holland, Amsterdam-New York, 1982.

\bibitem{Elephant}
P.~T. Johnstone.
\newblock {\em Sketches of an elephant - A topos theory compendium}.
\newblock Clarendon Press - Oxford, 2002.

\bibitem{tripinret}
A.~M. Pitts.
\newblock Tripos theory in retrospect.
\newblock {\em Math. Struct. in Comp. Science}, 12:265--279, 2002.

\end{thebibliography}
\end{document}